\definecolor{webred}{rgb}{0.75,0,0}
\definecolor{webgreen}{rgb}{0,0.75,0}
\definecolor{refkey}{gray}{0.75}
\numberwithin{equation}{section}
\def\R{\mathbb{R}}
\def\d{\displaystyle}
\def\e{{\varepsilon}}
\def\intspc{\int_{\R^N}}
\def\til{~}
\DeclareMathOperator*{\supp}{supp}
\newtheorem{thm}{Theorem}
\newtheorem{cor}{Corollary}
\newtheorem{lem}{Lemma}[section]
\newtheorem{rem}{Remark}[section]
\newtheorem{Def}{Definition}
\date{}
\subjclass[2010]{35L71,  35B44}
\keywords{blow-up, lifespan, nonlinear wave equations, scale-invariant damping, time-derivative nonlinearity.}
\begin{document}

\title{ Blow-up and lifespan estimate for   wave equations
 with critical damping term of space-dependent type
 related to Glassey conjecture}

\author[Ahmad Z. Fino  and M. A. Hamza]{Ahmad Z. Fino$^{1}$ and Mohamed Ali Hamza$^{2}$}
\address{$^{1}$ Department of Mathematics, Sultan Qaboos University
 FracDiff Research Group (DR/RG/03),  P.O. Box 46, Al-Khoud 123, Muscat, Oman.}

\address{$^{2}$ Department of Basic Sciences, Deanship of Preparatory Year and Supporting Studies, Imam Abdulrahman Bin Faisal University, P.O. Box 1982, Dammam 34212, SAUDI ARABIA.}

\medskip

\email{a.fino@squ.edu.om; ahmad.fino01@gmail.com (Ahmad Z. Fino)}
\email{mahamza@iau.edu.sa (M.A. Hamza)}

\pagestyle{plain}


\maketitle

\begin{abstract}
The main purpose of the present paper is to study the blow-up problem
of the  wave equation with  space-dependent damping in the \textit{scale-invariant case} and  time derivative  nonlinearity with small initial data.
Under  appropriate  initial data which are compactly  supported,
 by using a test function method and taking into account the effect of the  damping term
 ($\frac{\mu}{\sqrt{1+|x|^2}}u_t$),  we provide
 that  in higher dimensions the  blow-up region  is given by $p \in (1, p_G(N+\mu)]$ where $p_G(N)$ is the Glassey exponent.
Furthermore, we shall     establish  a
blow-up region,  independent of $\mu$ given by
$p\in  (1, 1+\frac{2}{N}),$ for appropriate initial data
 in the
energy space with  noncompact support.

\end{abstract}


\date{\today}


\pagestyle{plain}

\medskip

\noindent {\bf MSC 2010 Classification}:  35B44,  35L71, 35L15.

\noindent {\bf Keywords:}   blow-up, lifespan, nonlinear wave equations, scale-invariant damping, time-derivative nonlinearity.


\section{Introduction}
\par\quad
In this work, we consider the  semilinear  wave equations with a power-nonlinearity of derivative type
\begin{equation}
\label{A}
\left\{
\begin{array}{ll}
\partial_{t}^2u - \ \Delta u +V(x)\partial_tu= |\partial_tu|^p &\quad\text{in $[0, T)\times\R^N$}, \\
u(x,0)=\e f(x), \quad \partial_tu(x,0)=\e g(x) &\quad x\in\R^N,
\end{array}
\right.
\end{equation}
where $V(x)=\frac{\mu}{\sqrt{1+|x|^2}}$,  $N\ge1$, $p>1$, and  $\mu\ge0$.
 Moreover, the parameter $\e$ is a positive number describing the size of the initial
data.\\ 

\par

It is worth-mentioning that the presence of nonhomogeneous  damping term in \eqref{A} has an interesting
impact on the (global) existence or the nonexistence of the solution of \eqref{A} and
its lifespan. Hence, it is natural to study the influence of the nonlinear terms on the
behavior of the solution and see whether or not this may produce a kind of competition
between the damping term  $V(x) \partial_tu$ and the time derivative  nonlinearity $|\partial_tu|^p$.\\

As noted in \cite{I}, equation \eqref{A} can modelize the
wave travel in a nonhomogeneous gas with damping, and the space dependent
coefficients represent the friction coefficients or potential. \\
%
%

 \par

 The semilinear wave equation equation for classical semilinear wave equation   with  power nonlinearity namely
\begin{equation}
\label{P0}
\left\{
\begin{array}{ll}
 \partial_{t}^2u - \ \Delta u = |u|^p &\quad\text{in $[0, T)\times\R^N$}, \\
 u(x,0)=\e f(x), \quad \partial_tu(x,0)=\e g(x) &\quad x\in\R^N.
\end{array}
\right.
\end{equation}
has been studied extensively.
For small initial data, non negative   and compactly  supported, the  critical power   is so-called   the Strauss exponent  is   denoted by $p_S$  and is
 given  by $p_S(N)=\frac{N+1+\sqrt{N^2+10N-7}}{2(N-1)}.$
Indeed,  if $p \le p_S$ then there is  a blow-up solution for  \eqref{P0} 
 and for $p > p_S$ a global solution exists; see e.g. \cite{John3,Strauss,YZ06,Zhou} among many other references.\\

 \par

 Coming back to \eqref{A} and
  by ignoring the damping term
 $V(x)\partial_tu$, the  problem
is reduced to the classical semilinear wave equation  namely
\begin{equation}
\label{P}
\left\{
\begin{array}{ll}
 \partial_{t}^2u - \ \Delta u = |\partial_t u|^p &\quad\text{in $[0, T)\times\R^N$}, \\
 u(x,0)=\e f(x), \quad \partial_tu(x,0)=\e g(x) &\quad x\in\R^N,
\end{array}
\right.
\end{equation}
 for which we have the Glassey conjecture. This case  is characterized by a critical power, denoted by $p_G$,  and  given by
\begin{equation}\label{Glassey}
p_G=p_G(N):=1+\frac{2}{N-1}.
\end{equation}
More precisely,  if $p \le p_G$ then there is no global solution for  \eqref{P},
for small initial data, non negative   and compactly  supported, 
  and for $p> p_G$ a global solution exists for small initial data; see e.g. \cite{Hidano1,Hidano2,John1,Rammaha,Sideris,Tzvetkov,Zhou1}.\\


 In the case where   the damping term is given by $\frac{\mu}{t+1}\partial_tu$ instead of
 $V(x)\partial_tu$, then the equation \eqref{A}
becomes
\begin{equation}
\label{B}
\left\{
\begin{array}{ll}
\d \partial^2_{t}u-\Delta u+\frac{\mu}{1+t}\partial_tu=|\partial_tu|^p,
&\quad \mbox{in}\ \R^N\times[0,\infty),\\
u(x,0)=\e f(x),\ \partial_tu(x,0)=\e g(x), &\quad  x\in\R^N.
\end{array}
\right.
\end{equation}
Concerning the blow-up results and lifespan estimate of the solution of \eqref{B}, a first
 blow-up region was obtained in
 \cite{LT2}
for $p \in (1, p_G(N+2\mu)]$.
Later, an important refinement  was performed in
 \cite{Palmieri}, using the
integral representation formula, where the new bound is given by
 $p \in (1, p_G(N+\mu)]$, for $\mu \in (0,2)$. Recently,
thanks to a better understanding of the corresponding linear problem to (\ref{B}),  an improvement in \cite{Our2}
shows that $p \in (1, p_G(N+\mu)]$ is  probably  the new critical exponent  for $\mu >0$. \\

\par

Turning to the analogous  nonlinear problem \eqref{A} with
${\mu}/{\sqrt{1+|x|^2}}$ being  changed by
${\mu}/{{(1+|x|^2)}^{\beta/2}},$ for some  $\beta>1$.
In this case, it is  reasonable to  expect that
the blow up region is similar to the case of pure   wave equation and  the scattering damping term has no
influence  in the dynamics.
The predicted blow up result   was  obtained in the  case $\beta>2$,  by Lai and Tu  \cite{LT}.
However, up to our knowledge, there is no result in the case $\beta \in (1,2].$\\

\par

The emphasis in the first part of the   manuscript  is to establish a blow-up results for  solution of
\eqref{A}, in the case where the initial data has compact support,
and to determine a candidate
as critical exponent.
Clearly, in the scale-invariant case  the situation is
different.
 Therefore, the goal  will be on the comprehension
of the influence of the damping term $ V(x)\partial_tu$ on the blow-up result and the lifespan estimate. In fact,
our
target is to give the upper bound, denoted here by $p_G(N,\mu)$, delimiting a
blow-up region for the energy solution of equation \eqref{A}.
First, as observed for  the problem \eqref{B}, where the damping produces a shift in $p_G$ in
the dimensional parameter of magnitude  $\mu$, we expect that the same phenomenon
holds for \eqref{A}. In other words, we predict that the upper bound is given by:
\begin{equation}
p_G(N,\mu):= p_G(N+\mu)=1+\frac{2}{N+\mu-1}, \quad \textrm{if} \quad N\ge 1.
\end{equation}

%
%

The argument which led to our
 blow up result obtained here in the case for some   initial data where the support is  compact,
  is
by employing the test function method.  In fact,  we shall use   a test function as product of a cut-off function and  an explicit solution of the conjugate equation corresponding to  the linear problem of \eqref{A}.   Let us denote that this strategy is
  inspired by  \cite{ISW, LS,LT3}. \\

\par

In the second part, we consider   the solution of \eqref{A} in  the case where  the initial data   is   decaying slowly at infinity.

 
 It is well-known that  the solution of  \eqref{P0} 
  blow up  for any $p>1$, for suitable   initial data     decaying slowly at infinity. 
  Indeed,
 if 
 $(f,g)$  satisfies  that
\begin{equation*}
  f(x)\equiv 0 \  \ \textrm{and}\ \  g(x)\ge \frac{\Pi_0 (x)}{(1+|x|)^{1+\kappa}},
  \end{equation*}
where $\Pi_0\equiv C$, if $0<\kappa<\kappa_0:=\frac{2}{p-1}$, and $\Pi$ is positive, monotonously increasing, $\d{\lim_{r\rightarrow\infty}\Pi_0(r)=\infty}$ if $\kappa=\kappa_0$, 
 then 
the system  \eqref{P0} has  a blow up solution for any $\kappa\in (0,\kappa_0]$. 
We mention  
 the pioneering results on non-compactly supported initial
data by Asakura \cite{A}  and also \cite{T00,T0}.
Furthermore, in the supercrtical range  $p>p_S(N)$, 
  a global (in time)  result for  solution  to \eqref{P0} in \cite{Hi},
   for small initial data      in the weighted space
 $L^{\infty}(\R^N,(1+r^{\kappa}))$, if $\kappa>\kappa_0$.
 Therefore,   the   critical (in the sense of interface between blow-up and global existence in the case $p>p_S(N)$)
decay of the initial data
is  $\kappa_0$.
 Let us denote that 
  if $u$ is a solution of \eqref{P0},
then for all $\lambda>0$,  $u_{\lambda}(x,t)=\lambda^{\kappa_0}u(\lambda x, \lambda t)$ is also a solution.
Therefore, the critical value of $\kappa_0$ is somehow related to the scaling of the equation \eqref{P0}. \\



In the same  way, 
 it is proven, in \cite{K}, when $N=2$ and $N=3$  that a   solution of \eqref{P}  blows up in finite time, in the case where the initial data satisfies:
  \begin{equation*}
  f(x)\equiv 0 \  \ \textrm{and}\ \  g(x)\ge \frac{M}{(1+|x|)^{\kappa}},
  \end{equation*}
  for some positive constant $M$ and 
  $0<\kappa<\kappa_1:=\frac{1}{p-1}$.
  Moreover,  the  lifespan  $T_\e$
satisfies
$T_\e \leq\ \d
 C \, \e^{-\frac{p-1}{1-\kappa (p-1)}}$.  This result was improved in \cite{T}, for any $N\ge2$.
 (see  also \cite{W} for more general initial data). 
 On the other hand, in the case $N=3$, 
  a global  result for  solution  to \eqref{P} in \cite{Hi},
   for radial initial data   small   in the weighted space
 $L^{\infty}(\R^N,(1+r^{\kappa}))$, if $\kappa>\kappa_1$ and  $p>p_G(N)$.
 In addition, we remark that if $u$ is a solution of \eqref{P},
then for all $\lambda>0$,  $u_{\lambda}(x,t)=\lambda^{\kappa_1}u(\lambda x, \lambda  t)$ is also a solution.
Therefore,   the expected  critical 
decay of the initial data
is $\kappa_1$.\\
 
%
%
%
%

As we said before, we are interested in this part in studying the  blow-up result of the solution of \eqref{A} in  the case where the support of the initial data   is  decaying slowly at infinity. 
By applying  the test function method for  a cut-off function, we  derive a  blow up result  for weak solution of \eqref{A}. In particular, 
we deduce   
a blow up result  for  an energy solution to \eqref{A},
if $p\in (1,  p_0(N))$, where
\begin{equation}
p_0(N):=1+\frac{2}{N}, \quad \textrm{for all} \quad N\ge 1.
\end{equation}

\par

This paper is organized as follows:
 First,  Section 2 is devoted to the definition  of the weak formulation of  \eqref{A}, in the energy space and the definition of weak solution, together with the
statement of the main theorems of our work.
 Then, in Section 3,  we get  a blow-up result in higher dimensions as stated in Theorem 1.
 Finally,
in Section 4,  we  establish  a new  blow up result for weak   solutions of the problem  \eqref{A}   with some initial data  as stated in Theorem 2.
\section{Main Result}\label{sec-main}
\par
This section is devoted to the statement of the main results. However, before that we
start by  giving the definition of energy solution for our problem \eqref{A}.
\begin{Def}\label{def1}
Let $N\ge1$, $f\in H^1(\mathbb{R}^N)$, $g\in L^2(\mathbb{R}^N)$  and $T>0$.  Let $u$ be such that
$u\in \mathcal{C}([0,T),H^1(\R^N))\cap \mathcal{C}^1([0,T),L^2(\R^N)) \
\text{and} \ \partial_tu \in L^p_{loc}((0,T)\times \R^N),$
verifies,  for any $\varphi \in \mathcal{C}^1_0\left([0, T)\times \R^N\right) \cap \mathcal{C}^{\infty}\left((0, T)\times \R^N\right)$, the following identity:
\begin{equation}\label{weaksol}
\begin{split}
& \e \int_{\R^N}g(x)\varphi( x,0)dx
+\int_0^T\int_{\R^N}|\partial_tu|^p\varphi(x, t) \, dxdt \\
=& \int_0^T\int_{\R^N} - \partial_tu(t, x)\partial_t\varphi( x,t) \, dxdt
+ \int_0^T\int_{\R^N}
\ \nabla u(t, x)\cdot\nabla\varphi(x, t) \, dxdt \\
&+ \int_0^T\int_{\R^N} V(x) \partial_tu(t, x)\varphi( x,t) \, dxdt
\end{split}
\end{equation}
and the condition $u(x,0)=\varepsilon f(x)$ is satisfied. Then, $u$ is called an {\bf energy solution} of
 (\ref{A}) on $[0,T)$.

 We denote the lifespan for the energy solution by:
$$T_{\e}(f,g):=\sup\{T\in(0,\infty];\,\,\hbox{there exists a unique energy solution $u$ of \eqref{A}}\}.$$
Moreover, if $T>0$ can be arbitrary chosen, i.e. $T_{\e}(f,g)=\infty$, then $u$ is called a global energy solution of \eqref{A}.
\end{Def}
 Furthermore, we shall
write   the
definition of weak solutions for the  problem \eqref{A}.
\begin{Def}\label{def2}
Let $N\ge1$,  $f\in L^1_{loc}(\R^N), g\in L^1_{loc}(\R^N)$ and $T>0$.
 Let $u$ be such that
$ u \in L^1_{loc}((0,T)\times \R^N)
 \text{and} \ \partial_tu \in L^p_{loc}((0,T)\times \R^N),$
verifies, for all   $\varphi \in \mathcal{C}^1_c\left([0, T)\times \R^N \right) \cap \mathcal{C}^{\infty}\left((0, T)\times \R^N
\right)$, the following:
\begin{equation}\label{weaksolbis}
\begin{split}
& \e \int_{\R^N}g(x)\varphi( x,0)dx
+\int_0^T\int_{\R^N}|\partial_tu|^p\varphi(x, t) \, dxdt \\
=& \int_0^T\int_{\R^N} - \partial_tu(t, x)\partial_t\varphi( x,t) \, dxdt
- \int_0^T\int_{\R^N}
u(t, x)\Delta \varphi(x, t) \, dxdt \\
&+ \int_0^T\int_{\R^N} V(x) \partial_tu(t, x)\varphi( x,t) \, dxdt
\end{split}
\end{equation}
and the condition $u(x,0)=\varepsilon f(x)$ is fulfilled. Then, $u$ is called a {\bf weak  solution} of
 (\ref{A})  on $[0,T)$.

  We denote the lifespan for the weak solution by:
$$T_w(f,g):=\sup\{T\in(0,\infty];\,\,\hbox{there exists a unique weak solution $u$ of \eqref{A}}\}.$$
Moreover, if $T>0$ can be arbitrary chosen, i.e. $T_w(f,g)=\infty$, then $u$ is called a global weak solution of \eqref{A}.
\end{Def}

Let us mention that, by integrating by parts, an energy solution to \eqref{A} is also  a weak solution to \eqref{A}.
%

\par

The following theorems state the main results of this article.

\par

\begin{thm}
\label{th1}
Let $R>0$, $N\ge 2$, $\mu\ge0$ and
$1<p\le p_G(N+\mu).
$
Assume that $f\in H^1(\R^N)$, $g\in L^{2}(\R^N)$
are compactly supported functions   on  $B_{\R^N}(0,R)$  
  and satisfy
  \begin{equation}\label{C0}
   \intspc \big(\Delta f(x)+ g(x)\big)\phi(x) dx >0.
  \end{equation}
  where $\phi(x)$  is  a solution of the elliptic problem \eqref{phi}.\\
Suppose that $u$ is an energy solution of \eqref{A} with compact support
\begin{equation}\label{suppcond}
\supp u \in	\left\{(x,t) \in \R^N   \times   [0,T) \colon |x| \le R+ t\right\}.
\end{equation}
Then, there exists a constant $\e_0=\e_0(f, g, \mu, N, p,R)>0$
such that the lifespan $T_\e$ verifies
\begin{equation}\label{T-epss}
T_\e \leq
\d \left\{
\begin{array}{ll}
 C \, \e^{-\frac{2(p-1)}{2-(N+\mu-1)(p-1)}}
 &
 \ \text{for} \
 1<p<p_G(N+\mu), \vspace{.1cm}
 \\
 \exp\left(C\e^{-(p-1)}\right)
&
 \ \text{for} \ p=p_G(N+\mu),
\end{array}
\right.
\end{equation}
for $0<\e\le\e_0$ and some constant $C$ independent of $\e$.
\end{thm}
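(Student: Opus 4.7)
The strategy is a test-function argument in the spirit of \cite{ISW,LS,LT3}. I would take as test function the product $\psi_T(x,t) := \phi(x)\,e^{-t}\,\eta_T(t)^{q}$, with $q=2p'$ (where $p':=p/(p-1)$) and $\eta_T\in\mathcal{C}_c^\infty([0,T))$ equal to $1$ on $[0,T/2]$, vanishing at $T$, with $\eta_T'(0)=0$. Here $\phi$ is the positive solution of the elliptic problem \eqref{phi}, chosen precisely so that the ansatz $\phi(x)\,e^{-t}$ annihilates the formal adjoint $L^{*}v := v_{tt}-\Delta v - V(x)v_t$ of the linear part of \eqref{A}; this pins down \eqref{phi} as $\Delta\phi = (1+V(x))\phi$ in $\R^N$, and an ODE analysis of its radial equation should yield an asymptotic of the form $\phi(x)\sim c\,|x|^{(1+\mu-N)/2}e^{|x|}$ as $|x|\to\infty$. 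An integration by parts in $x$ (valid by the compact support of $f$) converts \eqref{C0} into the equivalent statement $\int[(1+V)f+g]\phi\,dx > 0$, which is the combination that will appear as an initial-data term below.

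Inserting $\psi_T$ into the weak formulation \eqref{weaksol} and integrating by parts twice in $t$ and once in $x$, the bulk integrand $u\,L^{*}(\phi e^{-t})$ vanishes identically, so that only derivatives of $\eta_T$ contribute and all interior terms are supported in $t\in [T/2,T]$; the initial-time boundary terms combine, after using $\Delta\phi=(1+V)\phi$, to $\varepsilon\int(\Delta f+g)\phi\,dx$. This produces the identity
\begin{equation*}
\int_0^T\!\!\int_{\R^N}|\partial_t u|^p\,\psi_T\,dx\,dt + \varepsilon\int_{\R^N}(\Delta f+g)\phi\,dx = \int_{T/2}^T\!\!\int_{\R^N} u\,\phi\,e^{-t}\,\mathcal{R}(\eta_T,\eta_T',\eta_T'')\,dx\,dt,
\end{equation*}
where $\mathcal{R}$ is an explicit polynomial in the cut-off and its derivatives. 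Writing $u(x,t)=\varepsilon f(x)+\int_0^t\partial_s u(x,s)\,ds$, Fubini together with H\"older's inequality with exponents $p,p'$ then give, modulo a harmless $\varepsilon$-term arising from $\varepsilon f$, the bound
\begin{equation*}
\Big|\int_{T/2}^T\!\!\int u\,\phi\,e^{-t}\mathcal{R}\,dx\,dt\Big| \le C\Big(\int_0^T\!\!\int_{\R^N}|\partial_t u|^p\,\psi_T\,dx\,dt\Big)^{1/p}\Big(\int_{T/2}^T\!\!\int_{|x|\le R+t}\phi(x)^{p'}\,e^{-p't}\,dx\,dt\Big)^{1/p'}.
\end{equation*}

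The heart of the analysis lies in the weight integral on the right: substituting the asymptotic of $\phi$, on the cone $|x|\le R+t$ the factor $e^{-p' t}$ exactly cancels the growth $e^{p'|x|}$ of $\phi^{p'}$ and leaves a pure polynomial weight whose exponent is governed by $N+\mu$ rather than $N$---this is the mechanism that produces the shift to $p_G(N+\mu)$. Young's inequality absorbs the $(\int\!\!\int|\partial_t u|^p\psi_T)^{1/p}$ factor on the right into the first term on the left, yielding a functional inequality of the form $\varepsilon\,C_0 \le C\,W(T)$, with $C_0=\int(\Delta f+g)\phi\,dx>0$ and $W(T)$ of polynomial growth in $T$ in the subcritical regime $1<p<p_G(N+\mu)$; solving for $T$ produces the polynomial lifespan recorded in \eqref{T-epss}. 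At the critical exponent $p=p_G(N+\mu)$ the same scheme produces a logarithmic correction in $W(T)$, which upgrades the bound to the exponential lifespan $\exp(C\varepsilon^{-(p-1)})$.

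The most delicate step is the sharp asymptotic of $\phi$. Because $V(x)\sim \mu/|x|$ decays exactly at the borderline rate $1/|x|$, the $\mu$-correction to the classical Yordanov--Zhang profile is a genuine modification of the leading-order behaviour rather than a perturbative remainder, and uniform-in-$r$ control of this correction has to be propagated into the cone weight integral to realize the dimensional shift $N\to N+\mu$. The accompanying bookkeeping of cut-off powers needed to close H\"older tightly, and the logarithmic endpoint estimate at the critical exponent, constitute the second non-routine ingredient.
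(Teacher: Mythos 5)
Your overall strategy is the right one and matches the paper's in spirit: a test function built from $\phi(x)e^{-t}$ with $\phi$ solving \eqref{phi}, so that the adjoint operator annihilates the profile, followed by H\"older on the cone $|x|\le R+t$, where the bound $\phi(x)\lesssim (1+|x|)^{-(N-1-\mu)/2}e^{|x|}$ turns the weight integral into $(1+t)^{(N-1+\mu)/2}$ and produces the shift $N\to N+\mu$. Executed carefully, this yields the subcritical lifespan $T_\e\lesssim \e^{-\frac{2(p-1)}{2-(N+\mu-1)(p-1)}}$. Two structural differences from the paper are worth noting but are not fatal: (i) the paper takes the test function $-\partial_t\big(\eta_M^{2p'}(t)\psi(x,t)\big)\chi(x,t)$ rather than $\eta_T^{q}\psi$, precisely so that after integrating by parts \emph{every} remainder term pairs with $\partial_t u$ and H\"older closes directly against the nonlinearity $|\partial_tu|^p$; your variant leaves a term in $u$ itself, and your repair $u=\e f+\int_0^t\partial_s u\,ds$ plus Fubini forces you to compare the time-shifted weight $\int_{\max(s,T/2)}^{T}\phi e^{-t}\mathcal{R}\,dt$ against $\phi e^{-s}$ — the exponentials do cancel, but this needs to be checked and is not automatic from what you wrote (your displayed H\"older factor $\big(\int\!\!\int\phi^{p'}e^{-p't}\big)^{1/p'}$ is not the correct dual weight; the correct one is $\big(\int\!\!\int|\cdot|^{p'}\psi_T^{-p'/p}\big)^{1/p'}$, which collapses to $\int\!\!\int\phi e^{-t}(\cdots)$ and gives the paper's exponent). (ii) Your $\psi_T$ grows like $e^{|x|}$ and is not an admissible ($\mathcal{C}^1_0$) test function; a spatial cut-off $\chi$ as in the paper is needed, harmless because of \eqref{suppcond}.

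The genuine gap is the critical case $p=p_G(N+\mu)$. With a single H\"older--Young pass, the exponent of $T$ in your $W(T)$ is exactly $0$ at criticality, so the scheme delivers only $\e\,C_0\le C$, which gives no contradiction for small $\e$ and no lifespan bound; the ``logarithmic correction in $W(T)$'' you invoke does not arise from the argument as described — there is no source for a $\log T$ in the naive weight integral. The paper closes this by introducing the averaged functional $F(M)=\int_1^M\big(\int_0^T\!\!\int|\partial_tu|^p\psi\,\theta_\rho^{2p'}\big)\frac{d\rho}{\rho}$, proving $F(M)\le \ln 2\int\!\!\int|\partial_tu|^p\psi\,\eta_M^{2p'}$ and the differential inequality $M^{\frac{(N-1+\mu)(p-1)}{2}}F'(M)\ge C\big(C_0\e+F(M)\big)^p$; integrating this ODE is what simultaneously yields the subcritical polynomial bound and, at criticality (where the prefactor is exactly $M$), the bound $T_\e\le\exp\big(C\e^{-(p-1)}\big)$. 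Without this iteration functional (or an equivalent slicing argument), your proposal proves only the strict inequality $1<p<p_G(N+\mu)$ part of the theorem.
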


To state our second result, we define,  
 $\Pi_1\equiv C_1$, if $0<\kappa<\frac{1}{p-1}$, and $\Pi_1$ is positive, monotonously increasing, $\d{\lim_{r\rightarrow\infty}\Pi_1(r)=\infty},$ if $\kappa=\frac1{p-1}$. \\

Here is the statement of second   theorem in this paper:
\begin{thm}
\label{th2}
Let  $N\geq1$, $\mu\ge0$ and
$p>1$. Assume that $f\in  L^1_{loc}(\R^N)$  and $g\in L^1_{loc}(\R^N)$.
  Suppose that $u$ is a weak solution of \eqref{A}. Therefore, \\

i)
If  $f\equiv0$ and $g$ satisfies
\begin{equation}\label{case1t}
g(x)\geq\frac{\Pi_1(|x|)}{(1+|x|)^\kappa},\end{equation}
for some $\kappa\le\frac1{p-1}$, then the  solution of \eqref{A} blows-up in finite  time. Moreover, if $\kappa<\frac1{p-1}$, 
 there exists a constant $\e_0=\e_0(g, \mu,  p)>0$
such that the lifespan $T_w$ verifies
\begin{equation}\label{T-eps10av}
T_w \leq\
\d 
 C \, \e^{-\frac{p-1}{1-\kappa (p-1)}}, \qquad \forall \e \in ( 0,\e_0].
\end{equation}

ii) If $g\ge 0$ and $f$ satisfies 
\begin{equation*}
\Delta f(x)\geq\frac{\Pi_1(|x|)}{(1+|x|)^{\kappa+1}},
\end{equation*} 
for some $\kappa\le \frac1{p-1}$, then the  solution of \eqref{A} blows-up in finite  time. Moreover, Moreover, if $\kappa<\frac1{p-1}$, 
 there exists a constant $\e_0=\e_0(f,g, \mu,  p)>0$
such that the lifespan $T_w$ verifies
\eqref{T-eps10av}.\\

  iii)
  If  $f\in L^1(\R^N)$ and $g$ satisfies \eqref{case1t}, 
for some    $\kappa<\min (N+1,\frac1{p-1})$,  then the  solution of \eqref{A} blows-up in finite  time.
 Moreover,
 there exists a constant $\e_0=\e_0(f,g, \mu,  p)>0$
such that the lifespan $T_w$ verifies
\eqref{T-eps10av}.\\
\end{thm}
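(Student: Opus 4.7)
My approach is the test-function (nonlinear capacity) method applied to the weak formulation \eqref{weaksolbis}. Fix a large integer $q$ and standard cut-offs $\eta,\xi\in C^\infty([0,\infty))$, each equal to $1$ near $0$ and vanishing outside $[0,1]$, and set
\[
\vp(x,t) = \eta(t/T)^q\,\xi(|x|/T)^q,
\]
where $T$ is the candidate upper bound for the lifespan. Substituting $\vp$ into \eqref{weaksolbis}, the only term that resists direct H\"older estimation is $\int_0^T\intspc u\,\Delta\vp\,dx\,dt$, since $u$ is only $L^1_{loc}$. I would dispose of it via integration by parts in time: introducing $\Phi(x,t):=\int_t^T \Delta_x\vp(x,s)\,ds$, which satisfies $\p_t\Phi=-\Delta\vp$ and $\Phi(\cdot,T)\equiv 0$, one obtains
\[
-\int_0^T\intspc u\,\Delta\vp\,dx\,dt = -\e\intspc f(x)\,\Phi(x,0)\,dx - \int_0^T\intspc \p_tu\,\Phi\,dx\,dt.
\]
After substitution and applying the weighted Young inequality $|ab|\le \tfrac14|a|^p\vp + C\,|b|^{p'}\vp^{1-p'}$ with $a=\p_tu$ to each of the three terms containing $\p_tu$ on the right, the resulting $\tfrac34\int_0^T\intspc|\p_tu|^p\vp$ is absorbed into the LHS, yielding the master inequality
\[
\e\intspc g(x)\vp(x,0)\,dx + \e\intspc f(x)\Phi(x,0)\,dx \le C\int_0^T\intspc\Big(\frac{|\p_t\vp|^{p'}}{\vp^{p'-1}} + \frac{|\Phi|^{p'}}{\vp^{p'-1}} + V^{p'}\vp\Big)\,dx\,dt.
\]

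A direct calculation using the product form of $\vp$ together with the key uniform bound $|\Phi(x,t)|\le CT^{-1}$ on $\supp(\vp)$ shows that each of the three integrals on the right-hand side is bounded by $C T^{N+1-p'}$. The only subtlety is that $\eta$ must have a sufficiently degenerate zero at $\tau=1$ (for instance $\eta(\tau)=(1-\tau)_+^m$ near $\tau=1$ with $m$ large) so that $|\Phi|^{p'}/\vp^{p'-1}$ remains integrable in $t$; this matching of scales is the most delicate technical point, since a careless choice would spoil the exponent in \eqref{T-eps10av}. With the right-hand side thus controlled, proving each part reduces to extracting a matching lower bound for the LHS which grows strictly faster than $T^{N+1-p'}$.

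In part (i), $f\equiv 0$ eliminates the $\Phi(x,0)$ term and \eqref{case1t} gives $\intspc g\,\vp(x,0)\,dx \ge c\,\Pi_1(T)\,T^{N-\kappa}$; the master inequality then yields $\e\,\Pi_1(T)\le C\,T^{\kappa-1/(p-1)}$, which forces the lifespan bound \eqref{T-eps10av} in the subcritical case $\kappa<1/(p-1)$ and forces $T$ finite (via $\Pi_1(T)\to\infty$) in the critical case $\kappa=1/(p-1)$. In part (ii), $g\ge 0$ makes the $g$-contribution non-negative, while two spatial integrations by parts rewrite $\intspc f(x)\Phi(x,0)\,dx = \intspc \Delta f(x)\,\Psi(x)\,dx$ with $\Psi(x):=\int_0^T\vp(x,s)\,ds\ge c\,T\,\xi(|x|/T)^q$, and the lower bound on $\Delta f$ then produces the same estimate $c\,\Pi_1(T)\,T^{N-\kappa}$ as in (i). In part (iii), the estimate $\n{\Phi(\cdot,0)}_{L^\infty}\le CT^{-1}$ combined with $f\in L^1(\R^N)$ makes the $f$-term on the LHS only $O(\e T^{-1})$ and thus negligible, so the dominant contribution again comes from $g$; the extra restriction $\kappa<N+1$ arises in reconciling the spatial integration-by-parts estimates with the decay hypothesis on $g$ when both $f$ and $g$ contribute simultaneously to the LHS.
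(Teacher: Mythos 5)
Your overall strategy (nonlinear capacity/test function method, with a time primitive $\zeta_T(t)=\int_t^T\eta^q(s/T)\,ds$ to convert the $\int u\,\Delta\varphi$ term into a $\partial_t u$ term plus an initial-data term, followed by Young's inequality and absorption) is exactly the paper's. But there is one concrete gap: your spatial cut-off $\xi(|x|/T)$ equals $1$ near the origin, whereas the paper's spatial cut-off is \emph{annular}, supported in $\{T\le|x|\le 4T\}$ and identically $1$ on $\{2T\le|x|\le 3T\}$, vanishing for $|x|\le T$. This is not cosmetic. Your claim that all three right-hand side integrals are $O(T^{N+1-p'})$ fails for the damping term: with a ball cut-off,
\begin{equation*}
\int_0^T\!\!\int_{\R^N}V(x)^{p'}\varphi\,dx\,dt\ \le\ CT\int_{|x|\le T}(1+|x|)^{-p'}dx\ \sim\ \begin{cases}T^{N+1-p'}, & p'<N,\\ T\log T, & p'=N,\\ T, & p'>N,\end{cases}
\end{equation*}
because $V\sim\mu$ near the origin. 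When $p'\ge N$ (i.e.\ for every $p>1$ if $N=1$, and for $1<p\le N/(N-1)$ if $N\ge2$) the right-hand side is of order $T$, and the comparison with the lower bound $\e\,\Pi_1\,T^{N-\kappa}$ only gives a contradiction if $\kappa<N-1$, which is strictly stronger than the hypotheses $\kappa\le\frac{1}{p-1}$ (and excludes $N=1$ entirely). On the annulus, $V(x)\le\mu/T$, so $\int V^{p'}\varphi\le CT^{-p'}\cdot T^{N+1}$ uniformly in $N$ and $p$, which is what the proof needs. The ball cut-off also damages your lower bound: $\int_{|x|\le T}g\,\xi^q(|x|/T)\,dx\gtrsim \Pi_1(T)T^{N-\kappa}$ requires $\kappa<N$, while the theorem permits $\kappa$ up to $\frac{1}{p-1}$ (part (i), (ii)) or up to $\min(N+1,\frac{1}{p-1})$ (part (iii)), which can exceed $N$; integrating over $\{2T\le|x|\le3T\}$ gives $\gtrsim\Pi_1(2T)T^{N-\kappa}$ for every $\kappa$. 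So you must relocate the support of the spatial weight to $|x|\sim T$.

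Two smaller remarks. First, your worry about $\eta$ needing a degenerate zero at $\tau=1$ is unnecessary: since $\eta_T$ is non-increasing, $\zeta_T(t)=\int_t^T\eta_T^k\le T\,\eta_T^k(t)$, hence $\eta_T^{-kp'/p}\zeta_T^{p'}\le T^{p'}\eta_T^{k}$, which is exactly the bound the paper uses; monotonicity plus $k\ge 2p'$ suffices. Second, in part (iii) the origin of the restriction $\kappa<N+1$ is simply that the $f$-contribution $C\|f\|_{L^1}\,\e\,T^{-1}$ on the right must be dominated by the $g$-contribution $c\,\e\,T^{N-\kappa}$ on the left, i.e.\ $N-\kappa>-1$; it has nothing to do with reconciling integration by parts with the decay of $g$.
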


By exploiting the fact that, if $f\in H^1(\R^N)$ and $g\in L^2(\R^N)$
and  by integrating by parts, we conclude  that   a weak solution of \eqref{A}  with initial data in $H^1(\R^N)\times L^2(\R^N)$ is also  an energy solution of \eqref{A}.
Therefore,
an important consequence of  Theorem \ref{th2}
is the following result:
\begin{cor}
\label{cor1}
Let  $\mu\ge0$ and
$1<p<1+\frac{2}{N}.$
Assume that $f\in  H^1(\R^N)\cap L^1(\R^N)$  and $g\in L^2(\R^N) $
and $g$ satisfiying \eqref{case1t}, 
for some     $ \kappa\in (\frac{N}2,\frac1{p-1})$ and   $\kappa<N+1$,  then 
the weak solution  $u$ of \eqref{A} is an energy  solution. Therefore the  solution of \eqref{A} blows-up in finite  time. Moreover,
 there exists a constant $\e_0=\e_0(f,g, \mu,  p)>0$
such that the lifespan $T_\e$ verifies
\begin{equation}\label{T-eps}
T_\e \leq\
\d
 C \, \e^{-\frac{p-1}{1-\kappa (p-1)}},
\end{equation}
for $0<\e\le\e_0$ and some constant $C$ independent of $\e$.\\

%
\end{cor}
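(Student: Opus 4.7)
The plan is to derive Corollary \ref{cor1} from Theorem \ref{th2}(iii) after checking compatibility of the hypotheses and the elementary inclusion between the two solution classes.

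First I would verify that the admissible range for $\kappa$ is non-empty and that all hypotheses of Theorem \ref{th2}(iii) are in force. Since $1<p<1+\tfrac{2}{N}$, we have $\tfrac{1}{p-1}>\tfrac{N}{2}$, so the interval $(\tfrac{N}{2},\tfrac{1}{p-1})$ is non-empty and, when intersected with $\kappa<N+1$, gives $\kappa<\min(N+1,\tfrac{1}{p-1})$, which is precisely the constraint required by Theorem \ref{th2}(iii). Moreover, the lower cutoff $\kappa>N/2$ is exactly the threshold that makes the minorant $\tfrac{\Pi_1(|x|)}{(1+|x|)^\kappa}$ square-integrable at infinity, so that the assumption $g\in L^2(\R^N)$ is consistent with the pointwise lower bound \eqref{case1t}. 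Since $f\in L^1(\R^N)$ and $g\in L^2(\R^N)\subset L^1_{\mathrm{loc}}(\R^N)$, Theorem \ref{th2}(iii) applies and yields the bound $T_w(f,g)\le C\,\e^{-(p-1)/(1-\kappa(p-1))}$ for $\e\in(0,\e_0]$.

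It then remains to transfer this estimate from the weak-solution class to the energy-solution class. As stated right after Definition \ref{def2} in Section \ref{sec-main}, every energy solution is automatically a weak solution; the identification reduces to the spatial integration by parts $\int\nabla u\cdot\nabla\varphi\,dx=-\int u\,\Delta\varphi\,dx$, which is valid because $u(t,\cdot)\in H^1(\R^N)$ and $\varphi(t,\cdot)$ has compact support in $x$. Hence $T_\e(f,g)\le T_w(f,g)$; conversely, since the initial data lie in $H^1(\R^N)\times L^2(\R^N)$, the remark immediately preceding the corollary asserts the reverse inclusion, so the weak solution with such data is in fact an energy solution and the two lifespans coincide. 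The bound \eqref{T-eps} is then immediate from the bound on $T_w$. The only non-trivial point in the argument is this regularity-upgrade step from the weak to the energy class, but it is the quoted consequence of the $H^1\times L^2$ regularity of the initial data and requires nothing beyond the integration-by-parts identity above.
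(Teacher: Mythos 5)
Your proposal is correct and follows essentially the same route as the paper: the corollary is obtained by applying Theorem \ref{th2}(iii) (after noting that $1<p<1+\tfrac2N$ makes the interval $(\tfrac N2,\tfrac1{p-1})$ non-empty and that $\kappa<\min(N+1,\tfrac1{p-1})$ holds) and then using the integration-by-parts identification between weak and energy solutions for $H^1\times L^2$ data, exactly as the paper asserts in the remark preceding the corollary. Your additional observations — that $\kappa>N/2$ is precisely what makes the minorant in \eqref{case1t} compatible with $g\in L^2(\R^N)$, and the explicit comparison of the two lifespans — are correct refinements of the paper's (very terse) justification.
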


\par

\begin{rem}
The result obtained in Theorem \ref{th1} holds only in high space dimensions, $N\ge2$.
 Although, we expect the result is  also true in the one dimentional case, this has to be confirmed.
\end{rem}

\begin{rem}
 The blow up result  stated in Corollary \ref{cor1}
   shows that there exist a   blow-up region not depending  on the  parameter $\mu$. In addition,
 we deduce that, in the case $\mu>1$,    the result obtained in Corollary \ref{cor1} implies that, for some
    initial data in the energy space  with non compact support,
   there is a new region 
 comparing  to the result  obtained in Theorem \ref{th1}.
\end{rem}

%

\begin{rem}
Note that the result in
Theorem  \ref{th1} holds true after replacing the linear damping term in \eqref{A}  $V(x) \partial_tu$ by $b(x)\partial_tu$ with $\big(b(x)-V(x)\big)$ belongs to $L^1(\R^N)$. The proof of this  generalized damping case can be obtained by following the same steps as in the proofs of Theorem \ref{th1}  with the necessary modifications related the test functions.
\end{rem}

\begin{rem}
The techniques used in this article can be easily adapted in other
contexts. More precisely, the case of the equation  \eqref{A} with mass term $\frac{\nu^2}{1+|x|^2}u$, for  suitable  values of $\mu$ and $\nu$. Furthermore, we can use the aforementioned
techniques to study  the associated  system of \eqref{A}.
\end{rem}
%
%
Throughout this article, we will denote by $C$  a generic positive constant which may depend on the data ($p,\mu,N,f,g$) but not on $\varepsilon$ and whose the value may change from line to line. Nevertheless, we will precise the dependence of the constant $C$ on the parameters of the problem when it is necessary.\\

\section{ Blow-up results in higher dimensions}
This section is devoted to the proof of Theorem \ref{th1} which is somehow related to
determine the critical exponent associated with the nonlinear term in the problem
\eqref{A} in the higher dimentional  space.

\par

It is well known that the choice of the test function that will be introduced later  is crucial.
In  fact, we construct  a particular positive  solution
 $\psi(x, t)$  with separated variables and satisfies the conjugate equation corresponding to  the linear problem, namely $\psi(x, t)$  satisfies
\begin{equation}\label{lambda-eq}
\partial^2_t \psi(x, t)-\Delta \psi(x, t) -V(x)\partial_t\psi(x, t)=0.
\end{equation}
More precisely, we choose the function $\psi$ given by:
\begin{equation}
\label{psi33}
\psi(x,t):=\rho(t)\phi(x);
\quad
\rho(t):=e^{-t}
\end{equation}
where $\phi(x)$  is  a solution of the elliptic problem
\begin{equation}\label{phi}
\Delta \phi(x) =\left(1+V(x)\right)\phi(x), \qquad \forall  x\in \R^N.
\end{equation}
Note that  the existence  of a  positive solution for the elliptic problem \eqref{phi} is studied    in \cite{LLTW} in the case where $N\ge 2$.
In fact, from Lemma 2.4  in \cite{LLTW}, we know that  there exists a    $ \mathcal{C}^{2}( \R^N)$
 function $\phi$    solution of \eqref{phi}
which satisfies
\begin{equation}
\label{elip1}
0<\phi (x)\le C_0 (1+|x|)^{-\frac{N-1-\mu}{2}}e^{|x|}, \qquad \forall x\in \R^N,
\end{equation}
for some $C_0>0$.\\

\par

Now, we are in a position to state and prove  the following:
\begin{lem}
\label{lem1}
There exists a constant $C=C(N,R,\mu)>0$ such that
\begin{equation}
\label{psi}
\int_{|x|\leq R+t}\psi(x,t)dx
\leq C
 (1+t)^{\frac{N-1+\mu }{2}},
\quad\forall \ t\ge0.
\end{equation}
\end{lem}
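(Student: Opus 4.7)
The plan is to simply insert the decomposition $\psi(x,t)=e^{-t}\phi(x)$, apply the pointwise bound \eqref{elip1} on $\phi$, and reduce the problem to a one-dimensional integral estimate.

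First I would pass to spherical coordinates and use \eqref{elip1}:
\begin{equation*}
\int_{|x|\le R+t}\psi(x,t)\,dx \;\le\; C_0\,e^{-t}\int_{|x|\le R+t}(1+|x|)^{-\frac{N-1-\mu}{2}}e^{|x|}\,dx
\;\le\; C\, e^{-t}\int_0^{R+t}(1+r)^{-\frac{N-1-\mu}{2}}e^{r}\,r^{N-1}\,dr.
\end{equation*}
Since $r^{N-1}\le (1+r)^{N-1}$, the integrand is bounded by $(1+r)^{\frac{N-1+\mu}{2}}e^{r}$, so we are reduced to estimating $\int_0^{R+t}(1+r)^{\alpha}e^{r}\,dr$ with $\alpha:=\frac{N-1+\mu}{2}\ge 0$.

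Next I would establish the elementary bound $\int_0^{X}(1+r)^{\alpha}e^{r}\,dr \le C_\alpha (1+X)^{\alpha}e^{X}$ for $X\ge 0$. This follows by one integration by parts (or induction on $\lceil\alpha\rceil$): writing $(1+r)^{\alpha}e^{r}=\frac{d}{dr}\bigl((1+r)^{\alpha}e^{r}\bigr)-\alpha(1+r)^{\alpha-1}e^{r}$, the boundary term yields exactly $(1+X)^{\alpha}e^{X}$, and the leftover integral has lower exponent and is controlled by the same quantity up to a constant (absorbing the $r=0$ contribution, which is just a constant). Applied with $X=R+t$ this gives
\begin{equation*}
\int_0^{R+t}(1+r)^{\alpha}e^{r}\,dr \;\le\; C\,(1+R+t)^{\alpha}e^{R+t}.
\end{equation*}

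Combining the two steps yields
\begin{equation*}
\int_{|x|\le R+t}\psi(x,t)\,dx \;\le\; C\,e^{-t}(1+R+t)^{\frac{N-1+\mu}{2}}e^{R+t}\;\le\; C\,e^{R}(1+R+t)^{\frac{N-1+\mu}{2}},
\end{equation*}
and since $R$ is fixed we may absorb $e^R$ and $(1+R+t)^{\alpha}\le C_R(1+t)^{\alpha}$ into the constant, obtaining \eqref{psi}. No step looks delicate: the only thing to verify carefully is the one-dimensional bound with the exponential, which is standard. The essential point is that $e^{-t}$ exactly cancels the leading $e^{R+t}$ coming from the pointwise bound on $\phi$, leaving only the polynomial factor $(1+t)^{(N-1+\mu)/2}$.
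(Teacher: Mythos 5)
Your proposal is correct and follows essentially the same route as the paper: insert $\psi=e^{-t}\phi$, integrate the pointwise bound \eqref{elip1} in polar coordinates, and reduce to the one-dimensional estimate $\int_0^{R+t}(1+r)^{\alpha}e^{r}\,dr\le C(1+t)^{\alpha}e^{t}$, which the paper simply asserts as \eqref{psi3} and which you justify by integration by parts (for $\alpha\ge 0$ one can even more simply bound $(1+r)^{\alpha}\le(1+R+t)^{\alpha}$ and integrate the exponential). No gaps.
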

\begin{proof}
Integrating  \eqref{elip1} over the set $\{x, |x|\le R+t\}$, implies
\begin{equation}
\label{psi2}
\int_{|x|\leq R+t}\phi(x)dx
\leq C\int_0^{ R+t}r^{N-1}
 (1+r)^{-\frac{N-1-\mu}{2}}e^{r}
dr,
\quad\forall \ t\ge0.
\end{equation}
In addition, it is easy to get
\begin{equation}
\label{psi3}
\int_0^{ R+t}r^{N-1}
 (1+r)^{-\frac{N-1-\mu}{2}}e^{r}dr
\le  C
 (1+t)^{\frac{N-1+\mu }{2}}e^{t},
\quad\forall \ t\ge0.
\end{equation}
Thus, combining \eqref{psi2} with \eqref{psi3}, we derive that
\begin{equation}
\label{psi4}
\int_{|x|\leq R+t}\phi(x)dx
\le  C
 (1+t)^{\frac{N-1+\mu }{2}}e^{t},
\quad\forall \ t\ge0.
\end{equation}
Employing the estimate \eqref{psi4} and  the expression of $\psi$ given by \eqref{psi33}, we deduce \eqref{psi}.
This concludes the proof of  Lemma \ref{lem1}.
\end{proof}
Now, we are ready to give the proof of  Theorem \ref{th1}.
\begin{proof}
[Proof of Theorem\til\ref{th1}]
For the strategy of proof, we basically follow the test function method.

Let $\eta$  be a non-increasing cut-off function such that
 $\eta(r)\in \mathcal{C}^{\infty}([0, +\infty))$  and satisfies
\begin{equation}\label{102}
\eta(r) :=
 \left\{
 \begin{aligned}
 &1
 && \text{for $r\le\tfrac12$},
 \\
 &\text{decreasing}
 && \text{for $\tfrac12<r<1$},
 \\
 &0
 && \text{for $r\ge 1$}.
 \end{aligned}
 \right.
\end{equation}
Let $T>0$. Now, we introduce the following test function:
\begin{equation}\label{phi0}
\Phi( x,t):=
 \left\{
 \begin{aligned}
 &
 && - \partial_t\left(\eta_M^{2p'}(t)\psi(x,t)\right)  \chi(x,t),&\text{for $t>0$},
 \\
 &
 && \phi(x) \chi(x,0),&  \text{for $t=0$},
 \end{aligned}
 \right.
\end{equation}
where $M\in (1, T)$, $p'=\frac{p}{p-1}$, the function $\psi$ is given by \eqref{psi33} and

\begin{equation}\label{103}
\eta_M(t) := \eta\left(\frac tM \right),
\qquad
\chi(x,t) := \eta\left( \frac{|x|}{2 (R+t)} \right).
\end{equation}
Using Definition \ref{def1} and performing an integration by parts in
space in the second term in the right-hand side of \eqref{weaksol}, we write
\begin{equation}\label{weaksol5}
\begin{split}
& \e \int_{\R^N}g(x)\varphi( x,0)dx
+\int_0^T\int_{\R^N}|\partial_tu(x,t)|^p\varphi(x, t) \, dxdt \\
=&- \int_0^T\int_{\R^N} \partial_tu( x,t)\partial_t\varphi( x,t) \, dxdt
- \int_0^T\int_{\R^N}
\  u(x, t)\Delta\varphi(x, t) \, dxdt \\
&+ \int_0^T\int_{\R^N} V(x) \partial_tu(x,t)\varphi( x,t) \, dxdt.
\end{split}
\end{equation}
It is worth mentioning that,
 $\Phi(x,t) \in \mathcal{C}^1_0([0,+\infty)\times\R^N) \cap \mathcal{C}^{\infty}((0,+\infty)\times\R^N)  $.
Now, substituting in \eqref{weaksol5}
$\varphi(x,t)$ by $\Phi(x,t)$, exploiting the compact
 support condition \eqref{suppcond} on $u$, we get
\begin{align}\label{weaksol6}
 \e \int_{\R^N}g(x)\phi( x)dx
-\int_0^T\int_{\R^N}|\partial_tu(x,t)|^p\partial_t\left(\eta_M^{2p'}(t)\psi(x,t)\right)   \, dxdt
\qquad \qquad\qquad\qquad\qquad\qquad\qquad\qquad
 \nonumber\\
= \int_0^T\int_{\R^N}  \partial_tu( x,t) \partial^2_t\left(\eta_M^{2p'}(t)\psi(x,t)\right)   \, dxdt
+ \int_0^T\int_{\R^N}
\  u(x, t) \partial_t\left(\eta_M^{2p'}(t)\Delta \psi(x,t)\right)  \, dxdt\qquad\qquad\qquad\nonumber \\
- \int_0^T\int_{\R^N} V(x) \partial_tu(x,t) \partial_t\left(\eta_M^{2p'}(t)\psi(x,t)\right)   \, dxdt.\qquad\qquad\qquad\qquad\qquad\qquad\qquad
\end{align}
Now, using the fact $\partial_t \psi=-\psi$, we can write
\begin{equation}\label{testfunc1}
-\partial_t \big(\eta_M^{2p'}(t) \psi(x,t)\big)=  \eta_M^{2p'}(t) \psi(x,t) 
-\partial_t \big(\eta_M^{2p'}(t) \big)\psi(x,t), 
 \qquad \forall t\ge0.
\end{equation}
Therefore, by exploiting the compact
 support condition \eqref{suppcond} on $u$,  integrating by parts, \eqref{testfunc1} and  \eqref{weaksol6}, we deduce that
\begin{align}\label{weaksol7}
 &\e \int_{\R^N}\big(\Delta f(x)+g(x)\big)\phi( x)dx+\int_0^T\int_{\R^N}|\partial_tu(x,t)|^p\eta_M^{2p'}(t)\psi(x,t)   \, dxdt \nonumber\\
&-\int_0^T\int_{\R^N}|\partial_tu(x,t)|^p\partial_t\left(\eta_M^{2p'}(t)\right)\psi(x,t)   \, dxdt \\
=& \int_0^T\int_{\R^N}  \partial_tu( x,t) \psi(x,t)\Big(\partial^2_t\left(\eta_M^{2p'}(t)\right)-2 \partial_t\left(\eta_M^{2p'}(t)\right)\Big)  \, dxdt\nonumber\\
&+ \int_0^T\int_{\R^N}
\ \partial_tu(x, t)\eta_M^{2p'}(t)
\Big(\partial^2_t\psi(x,t)- \Delta \psi(x,t)- V(x) \partial_t\psi(x,t)\Big)
  \, dxdt\nonumber \\
&- \int_0^T\int_{\R^N} V(x) \partial_tu(x,t) \psi(x,t) \partial_t\left(\eta_M^{2p'}(t)\right)  \, dxdt.\nonumber
\end{align}
By exploiting the fact  $\partial_t\left(\eta_M^{2p'}(t)\right)\le 0$,
  and taking into account that $\psi$ verify \eqref{lambda-eq}, we obtain
\begin{align}\label{weaksol8}
 \e \  C_0+\int_0^T\int_{\R^N}|\partial_tu(x,t)|^p\eta_M^{2p'}(t)\psi(x,t)   \, dxdt
\le & \int_0^T\int_{\R^N}  \partial_tu( x,t) \psi(x,t)\partial^2_t\left(\eta_M^{2p'}(t)\right) \, dxdt\nonumber\\
 &-2 \int_0^T\int_{\R^N}  \partial_tu( x,t) \psi(x,t) \partial_t\left(\eta_M^{2p'}(t)\right)\Big)  \, dxdt\nonumber\\
&- \int_0^T\int_{\R^N} V(x) \partial_tu(x,t)\psi(x,t)  \partial_t\left(\eta_M^{2p'}(t)\right)  \, dxdt\nonumber\\
&=:I_1+I_2+I_3,
\end{align}
where
$$
C_0 \equiv C_0(f,g) := \intspc (\Delta f(x)+ g(x))\phi(x) dx >0
$$
is a positive constant thanks to \eqref{C0}.

\par

 Now, let us  define the functions
\begin{equation}\label{101}
\theta(t) :=
\left\{
\begin{aligned}
&0
&& \text{for $t\le\tfrac12$,}
\\
&\eta(t) && \text{for $t>\tfrac12$,}
\end{aligned}
\right.
\end{equation}
and
\begin{equation}\label{thetaM}
\theta_{M}(t):=\theta\left(\frac tM \right).
\end{equation}
A straightforward computation implies the following inequalities:
\begin{align}
\Big|\partial_t \left(\eta_M^{2p'}(t)\right)\Big|
	&\le  \frac{C}{M}
	\theta_M^{\frac{2p'}{p}}(t),\label{E1}
	\\
\Big|\partial^2_t \left(\eta_M^{2p'}(t)\right)\Big|	&\le
	\frac{C}{M^2}
	\theta_M^{\frac{2p'}{p}}(t).\label{E2}
\end{align}
Again here thanks to the fact that
 the compact
 support condition \eqref{suppcond} on $u$, \eqref{psi}, \eqref{E2}, and H\"{o}lder's inequality, we deduce that

\begin{align}\label{D2}
I_1\le& \, \frac{C}{M^2}
\left(\int_{\frac M2}^M\int_{|x|\le t+R}\ \psi(x,t)  \, dxdt \right)^{\frac{1}{p'}}
\left(\int_0^T\int_{\R^N}|\partial_tu(x,t)|^p\ \theta_M^{2p'}(t) \psi(x,t)  \, dxdt \right)^{\frac1p}\nonumber\\
\le& \, \frac{C}{M^2}
\left( \int_{\frac M2}^M
 (1+t)^{\frac{N-1+\mu }{2}}dt \right)^{\frac{1}{p'}}
 \left(\int_0^T\int_{\R^N}|\partial_tu(x,t)|^p\ \theta_M^{2p'}(t) \psi(x,t)  \, dxdt \right)^{\frac1p}\\
\le& \, CM^{-1+\frac{(N-1+\mu)(p-1)-2}{2p}}
 \left(\int_0^T\int_{\R^N}|\partial_tu(x,t)|^p\ \theta_M^{2p'}(t) \psi(x,t)  \, dxdt \right)^{\frac1p}.\nonumber
\end{align}
Similarly, by \eqref{E2} we obtain
\begin{align}\label{D3}
I_2
\le& \, CM^{\frac{(N-1+\mu)(p-1)-2}{2p}}
 \left(\int_0^T\int_{\R^N}|\partial_tu(x,t)|^p\ \theta_M^{2p'}(t) \psi(x,t)  \, dxdt \right)^{\frac1p}.
\end{align}
Also, similar estimations yield to
\begin{align}\label{D4}
I_3
\le& \, CM^{\frac{(N-1+\mu)(p-1)-2}{2p}}
 \left(\int_0^T\int_{\R^N}|\partial_tu(x,t)|^p\ \theta_M^{2p'}(t) \psi(x,t)  \, dxdt \right)^{\frac1p}.
\end{align}
Gathering \eqref{weaksol8}, \eqref{D2}, \eqref{D3} and \eqref{D4}, we infer
\begin{align}\label{f1}
 \e   C_0+\int_0^T\int_{\R^N}|\partial_tu(x,t)|^p\eta_M^{2p'}(t)\psi(x,t)   \, dxdt
 &\le CM^{\frac{(N-1+\mu)(p-1)-2}{2p}}\\
 &\times \left(\int_0^T\int_{\R^N}|\partial_tu(x,t)|^p\ \theta_M^{2p'}(t) \psi(x,t)  \, dxdt \right)^{\frac1p}.\nonumber
\end{align}
Now, we introduce the following:
\begin{equation}\label{ff}
F(M)=\int_1^M\left(\int_0^T\int_{\R^N}|\partial_tu(x,t)|^p\psi(x,t)\theta_{\rho}^{2p'}(t) \, dxdt \right)\frac1{\rho}d\rho,\qquad  \forall M\in [1,T).
 \end{equation}
 Utilizing  the definition of  $\theta_M$
(given by \eqref{thetaM}), we easily write
\begin{align}\label{sigma0}
F(M)
=&\int_0^T\int_{\R^N}|\partial_tu(x,t)|^p\psi(x,t)\int_1^M\theta^{2p'}(\frac{t}{\rho})\frac1{\rho}d\rho  \, dxdt \nonumber\\
=&\int_0^T\int_{\R^N}|\partial_tu(x,t)|^p\psi(x,t)\int_{\frac tM}^t\theta^{2p'}(\rho)\frac1{\rho}d\rho  \, dxdt.
\end{align}
Let us recall from the expressions of $\theta$, $\theta_M$, $\eta$ and $\eta_M$  defined  in \eqref{101}, \eqref{thetaM}, \eqref{102} and
\eqref{103} that we have
\begin{align}\label{sigma}
F(M)
\le&\int_{0}^T\int_{\R^N}|\partial_tu(x,t)|^p\psi(x,t)\eta^{2p'}\left(\frac tM\right)\int_{\frac 12}^1 \frac1{\rho}d\rho  \, dxdt\nonumber \\
=& \ln2\int_0^T\int_{\R^N} |\partial_tu(x,t)|^p \psi(x,t)  \eta_M^{2p'}(t)\, dxdt.
\end{align}
A differentiation in $M$ of the equation \eqref{ff} gives
\begin{equation}\label{diff}
F'(M)=\frac1{M}\int_0^T\int_{\R^N}|\partial_tu(x,t)|^p\psi(x,t)\theta_M^{2p'}(t) \, dxdt,\qquad  \forall M\in [1,T).
 \end{equation}
Combining \eqref{f1} together with \eqref{sigma} and \eqref{diff},  we get
\begin{equation*}\label{IneqforY}
M^{\frac{(N-1+\mu)(p-1)}{2}} F'(M) \ge C \left(C_0\e+  F(M)\right)^p,\qquad  \forall M\in [1,T).
\end{equation*}
Therefore, we easily obtain the blowup in
finite time for the functional
$F(M)$.
This follows \eqref{T-epss}  and we complete the proof of Theorem \ref{th1}.

\end{proof}

\section{ Blow-up results in the case of weak solutions}


In this section, 
we prove Theorem \ref{th2}
 here.

\subsection{Proof of Theorem\til\ref{th2}  }

\begin{proof}[Proof of Theorem\til\ref{th2}]

For the strategy of proof, we basically follow the test function method.

\par

Let $\xi$ be a cut-off  function such that
 $\xi(r)\in \mathcal{C}^{\infty}([0, +\infty))$, $0\leq\xi\leq1$,  and satisfies
\begin{equation}\label{102A}
\xi(r):=
 \left\{
 \begin{aligned}
 &0 && \text{for $0\leq r\leq1$},
 \\
  &\text{increasing}
 && \text{for $1\leq r\leq2$},
 \\
 &1 && \text{for $2\leq r\leq3$},
 \\
 &\text{decreasing}
 && \text{for $3\leq r\leq4$},
 \\
 &0
 && \text{for $r\geq 4$}.
 \end{aligned}
 \right.
\end{equation}
Let $\eta$ be a cut-off function such that
 $\eta(r)\in \mathcal{C}^{\infty}([0, +\infty))$  and satisfies
\begin{equation}\label{102AA}
\eta(r) :=
 \left\{
 \begin{aligned}
 &1
 && \text{for $r\le\tfrac12$},
 \\
 &\text{decreasing}
 && \text{for $\tfrac12<r<1$},
 \\
 &0
 && \text{for $r\ge 1$}.
 \end{aligned}
 \right.
\end{equation}
If $T_{\varepsilon}\le 1$, then the assertion is trivial by choosing $\e$  small enough.
 Assume  that $T_{\varepsilon}\ge 1$  and let $T\in (1,T_{\varepsilon})$.
 Now, we introduce the following test function:
\begin{equation}\label{TestA}
\Phi( x,t):=\eta_{T}^{k}(t)\phi_T^\ell(x),
 \end{equation}
where $k,\ell\geq 2p^{\prime}$, and
\begin{equation}\label{103A}
\eta_{T}(t) := \eta\left(\frac{t}{T} \right),
\qquad
\phi_T(x) := \xi\left( \frac{|x|}{T} \right).
\end{equation}
Let us define an additional function $\zeta_{T}=\zeta_{T}(t)$ such that
\begin{align}\label{zeta}
\zeta_{T}(t):=\int\nolimits_t^{\infty}\eta^k_{T}(\tau)\,\mathrm{d}\tau.
\end{align}
From \eqref{zeta}, we write $\zeta_{T}^\prime(t)=-\eta^k_{T}(t)$, and $\hbox{supp }\zeta_{T}\subseteq[0,{T}]$.

Substituting in  \eqref{weaksolbis}
$\varphi(x,t)$ by $\Phi(x,t)$, 
  we get
\begin{align}\label{weaksol6A}
& \e \int_{\R^N}g(x)\phi_T^\ell( x)dx
+\int_0^{T} \int_{\R^N}|\partial_tu(x,t)|^p\Phi(x,t)   \, dxdt \\
=& -k \int_0^{T} \int_{\R^N} \partial_tu( x,t)\eta_{T}^{k-1}(t)\eta^\prime_{T}(t)\phi_T^\ell(x) \, dxdt
+\int_0^{T} \int_{\R^N}\frac{d}{dt}\zeta_{T}(t)
   u(x, t)
 \Delta\big( \phi_T^\ell(x)\big) \, dxdt \nonumber\\
&+ \int_0^{T} \int_{\R^N} V(x) \partial_tu(x,t) \, \eta_{T}^{k}(t)\phi_T^\ell(x)dxdt.\nonumber
\end{align}
Performing an integration by parts for the second  term in the second line yields
\begin{align}\label{weaksol7A}
& \e \int_{\R^N}g(x)\phi_T^\ell( x)dx
+\int_0^{T} \int_{\R^N}|\partial_tu(x,t)|^p\Phi(x,t)   \, dxdt \\
=& -k \int_0^{T} \int_{\R^N} \partial_tu( x,t)\eta_{T}^{k-1}(t)\eta^\prime_{T}(t)\phi_T^\ell(x) \, dxdt
-\int_0^{T} \int_{\R^N}
\   \partial_tu(x, t)\zeta_{T}(t) \Delta\big( \phi_T^\ell(x)\big)\, dxdt \nonumber\\
&+ \int_0^{T} \int_{\R^N} V(x) \partial_tu(x,t) \, \eta_{T}^{k}(t)\phi_T^\ell(x)dxdt- \e \zeta_{T}(0) \int_{\R^N}
\  f(x) \Delta\big( \phi_T^\ell(x)\big)\, dx.\nonumber
\end{align}
By using the identity \eqref{weaksol7A},
 we get that
\begin{equation}\label{weaksol8A}
 \e \zeta_{T}(0) \int_{\R^N}
\  f(x) \Delta\big( \phi_T^\ell(x)\big)\, dx+\e \int_{\R^N}g(x)\phi_T^\ell( x)dx
+\int_0^{T} \int_{\R^N}|\partial_tu(x,t)|^p\Phi(x,t)   \, dxdt\le J_1+J_2+J_3,
%
\end{equation}
where
\begin{align*}\label{defJ1234}
J_1=&:k \int_0^{T} \int_{\R^N}|\partial_tu( x,t)|\eta_{T}^{k-1}(t)|\eta^\prime_{T}(t)|\phi_T^\ell(x) \, dxdt,\nonumber\\
J_2=&: \int_0^{T} \int_{\R^N}\  |\partial_tu(x, t)|\zeta_{T}(t)| \Delta\big( \phi_T^\ell(x)\big)| \, dxdt,\nonumber\\
J_3=&: \int_0^{T} \int_{\R^N} V(x) |\partial_tu(x,t)| \, \eta_{T}^{k}(t)\phi_T^\ell(x)dxdt,\nonumber\\
\end{align*}
Let $ \nu>0$. By applying $\nu$-Young's inequality
\begin{equation}\label{Young}
AB\leq\nu A^p+C(\nu,p)B^{p^\prime},\,\, A\geq0,\;B\geq0,\;p+p^\prime=pp^\prime,\,\, C(\nu,p)=(\nu\,p^p)^{-1/(p-1)}(p-1),
\end{equation}
we get
\begin{equation}\label{D2A}
J_1\  \leq\    \nu \int_0^{T} \int_{\R^N}|\partial_tu(x,t)|^p\Phi(x,t)     \, dxdt
+C(\nu)\int_0^{T} \int_{\R^N}\eta_{T}^{k-p^\prime}(t)|\eta^\prime_{T}(t)|^{p^\prime}\phi_T^\ell(x) \, dxdt.
\end{equation}
By \eqref{D2A}, the fact that $k\ge p^{\prime}$ and taking into account the expression of  $\eta_{T}$ and $\phi_T$ given by  \eqref{103A}, we
conclude
\begin{equation}\label{D2Abis}
J_1
\le \nu \int_0^{T} \int_{\R^N}|\partial_tu(x,t)|^p\Phi(x,t)     \, dxdt+C(\nu){T}^{N+1-{p^\prime}}.
\end{equation}
Similarly, we obtain
\begin{equation}\label{D3A}
J_2\le \nu \int_0^{T} \int_{\R^N}|\partial_tu(x,t)|^p\Phi(x,t)     \, dxdt
+C(\nu)\, \int_0^{T} \int_{\R^N}\eta_{T}^{-k\frac{p^\prime}{p}}(t)\zeta^{p^\prime}_{T}(t)\phi_T^{-\ell\frac{p^\prime}{p}}(x)
| \Delta\big( \phi_T^\ell(x)\big)| ^{p^\prime} \, dxdt.
\end{equation}
Using the fact that $\eta_T$ is decreasing and $\hbox{supp }\eta_{T}\subseteq[0,{T}]$, we obtain
\begin{equation}\label{D3AA}
\eta_{T}^{-k\frac{p^\prime}{p}}(t)\zeta^{p^\prime}_{T}(t)\leqslant \eta_{T}^{-k\frac{p^\prime}{p}}(t) \left(\int\nolimits_t^{T}\eta^{k}_{T}(\tau)\,\mathrm{d}\tau\right)^{p^\prime}\leqslant
 {T}^{p^\prime} \eta^{k}_{T}(t)\leqslant  T^{p^\prime}.
\end{equation}
In addition,  by exploiting the identity $
 \Delta
\big( \phi_T^\ell(x)\big)=\ell\phi_T^{\ell-1}(x)
 |\Delta \phi_T(x)+\ell(\ell-1)\phi_T^{\ell-2}(x)|\nabla \phi_T(x)|^2,$  we deduce
\begin{align*}
\phi_T^{-\ell\frac{p^\prime}{p}}(x)| \Delta\phi_T^\ell(x)|^{p^\prime}
\leqslant  C\phi_T^{\ell -p^{\prime}}(x)|\Delta \phi_T(x)|^{p^\prime} +C\phi_T^{\ell -2p^{\prime}}(x)|\nabla \phi_T(x)|^{2p^\prime} \leqslant  C\phi_T^{\ell -2p^{\prime}}(x)T^{-2p^\prime}.
\end{align*}
Plugging the above inequality, \eqref{D3AA} and the fact that $\ell-2p^{\prime}\ge0$ into \eqref{D3A}, we get
\begin{equation}\label{D2Abis1}
J_2
\le \nu \int_0^{T} \int_{\R^N}|\partial_tu(x,t)|^p\Phi(x,t)     \, dxdt+C(\nu){T}^{N+1-{p^\prime}}.
\end{equation}
In the same way, thanks to \eqref{Young}, we infer
\begin{equation}\label{D2Abis2}
J_3
\le \nu \int_0^{T} \int_{\R^N}|\partial_tu(x,t)|^p\Phi(x,t)     \, dxdt+C(\nu)\int_0^{T} \int_{\R^N}\eta_{T}^{k}(t)\phi_T^\ell(x)|V(x)|^{p^\prime}\, dxdt.
\end{equation}
To estimate the second term on the right-hand side, we have
\begin{equation}\label{D2Abis2hhh}
 \int_{\R^N}\phi_T^\ell(x)|V(x)|^{p^\prime}\, dx=\int_{T\leq|x|\leq 4T}\phi_T^\ell(x)|V(x)|^{p^\prime}\, dx\leq CT^{-p^\prime} \int_{|x|\leq 4T}\phi^{\ell}(x)\, dx\leq C\,T^{N-p^\prime}.
\end{equation}
Consequently, we derive
\begin{equation}\label{D2Abis111}
J_3
\le \nu \int_0^{T} \int_{\R^N}|\partial_tu(x,t)|^p\Phi(x,t)     \, dxdt+C(\nu){T}^{N+1-{p^\prime}}.
\end{equation}
Gathering \eqref{weaksol8A}, \eqref{D2Abis}, \eqref{D2Abis1}, and  \eqref{D2Abis111} and choosing $\nu$ small enough,
we deduce
\begin{align}\label{Estimate1}
\e \zeta_{T}(0) \int_{\R^N}
\  f(x) \Delta\big( \phi_T^\ell(x)\big)\, dx+ \e \int_{\R^N}g(x)\phi_T^\ell( x)dx
+\int_0^{T} \int_{\R^N}|\partial_tu(x,t)|^p\Phi(x,t)   \, dxdt
\le
C{T}^{N+1-{p^\prime}}.
\end{align}

Now, we distinguish three cases:\\

\noindent {\underline{Case I:}}
Let us denote in this case  $f\equiv0$ and $g$ satisfies
\begin{equation}\label{case1}
g(x)\geq\frac{\Pi_1(|x|)}{(1+|x|)^\kappa},\end{equation}
  where $\Pi_1\equiv C$ if $0<\kappa<\kappa_1:=\frac{1}{p-1}$, and $\Pi_1$ is positive, monotonously increasing, $\d{\lim_{r\rightarrow\infty}\Pi_1(r)=\infty}$ if $\kappa=\kappa_1$. 
In this case, the inequality \eqref{Estimate1} becomes
\begin{align}\label{Estimate1av1}
 \e \int_{\R^N}g(x)\phi_T^\ell( x)dx
+\int_0^{T} \int_{\R^N}|\partial_tu(x,t)|^p\Phi(x,t)   \, dxdt
\le
C{T}^{N+1-{p^\prime}}.
\end{align}
By exploiting  \eqref{102A}, \eqref{103A} and \eqref{case1},  we conclude that  we have 
\begin{equation}\label{160410}
 \e \int_{\R^N}g(x)\phi_T^\ell( x)dx\ge  \e \int_{2T\leq|x|\leq 3T}g(x)dx\ge
 \e \int_{2T\leq|x|\leq 3T}\frac{\Pi_1(|x|)}{(1+|x|)^{\kappa}} dx\geq C\Pi_1(2T)\e\,T^{N-\kappa},\end{equation}
for any $T>1$.  By combining \eqref{Estimate1av1} and \eqref{160410}, we obtain
 $$
 C\Pi_1(2T)\e \le
{T}^{\kappa+1-{p^\prime}}, \qquad  \hbox{for all}\,\,T>1,
$$
  which leads, using $\kappa\leq p^\prime-1$, to a contradiction by letting $T\rightarrow\infty$. In addition, when $\kappa< p^\prime-1$, it is easy to derive
  there exists a constant $\varepsilon_0= \varepsilon_0(g,N, p, \mu)> 0$ such
that $T_{w}$ satisfies
$$
 T_{w}\le
 C\e^{-\frac{p-1}
 {1-\kappa(p-1)}},\quad\hbox{ for all}\,\,\varepsilon\leq\varepsilon_0.
 $$

\noindent \underline{Case II:} First, we recall here that $f$ and $g$ satisfy
\begin{equation}\label{DF}
\Delta f(x)\geq\frac{\Pi_1(|x|)}{(1+|x|)^{\kappa+1}}\qquad\hbox{and}\qquad g(x) \geq0,
\end{equation}
where $\Pi_1\equiv C$, if $0<\kappa<\frac{1}{p-1}$, and $\Pi_1$ is positive, monotonously increasing, $\d{\lim_{r\rightarrow\infty}\Pi_1(r)=\infty}$ if $\kappa=\frac1{p-1}$. 
In this case, the inequality \eqref{Estimate1} implies
\begin{equation}\label{AA16}
\e \zeta_{T}(0) \int_{\R^N}\ \Delta f(x) \phi_T^\ell(x)\, dx\le C{T}^{N+1-{p^\prime}}, \qquad \hbox{for all}\,\,  T\ge 1.
\end{equation}
On the other hand, using 
  \eqref{102A}, \eqref{103A} and \eqref{DF}, we get
\begin{equation}\label{DF1}
\e \zeta_{T}(0) \int_{\R^N}\ \Delta f(x) \phi_T^\ell(x)\, dx\ge  
\e \zeta_{T}(0)\int_{2T\leq|x|\leq 3T}\frac{\Pi_1(|x|)}{(1+|x|)^{(\kappa+1)}} dx.
\end{equation}
Therefore,
\begin{equation}
\e \zeta_{T}(0) \int_{\R^N}\ \Delta f(x) \phi_T^\ell(x)\, dx
\geq C\e  \zeta_{T}(0) \Pi_1(2T)\,T^{N-\kappa-1}, \qquad \hbox{for all}\,\,  T\ge 1.
\end{equation}
Furthermore, taking account of
\begin{equation*}\label{10mars1}
\zeta_{T}(0)=\int\nolimits_0^{T}\eta^k\big(\frac{\tau}{T}\big)\,\mathrm{d}\tau\ge \int\nolimits_0^{T/2}\mathrm{d}\tau\ge \frac{T}2,
\end{equation*}
 we infer
 \begin{equation}\label{AA17}
\e \zeta_{T}(0) \int_{\R^N}\ \Delta f(x) \phi_T^\ell(x)\, dx
\geq C\e   \Pi_1(2T)\,T^{N-\kappa}, \qquad \hbox{for all}\,\, T\ge 1.
\end{equation}
 Now, combining (\ref{AA16}) and (\ref{AA17}), we obtain

 \begin{equation}\label{AA18}
 C\e \Pi_1(2T) \le
{T}^{\kappa+1-{p^\prime}}, \qquad  \hbox{for all}\,\,T\ge 1,
\end{equation}
  which leads, using $\kappa\leq p^\prime-1$, to a contradiction by letting $T\rightarrow\infty$. In addition, when $\kappa< p^\prime-1$, it is easy to derive a constant $\varepsilon_0= \varepsilon_0(g,N, p, \mu)> 0$ such
that $T_{w}$ satisfies
$$
 T_{w}\le
 C\e^{-\frac{p-1}
 {1-\kappa(p-1)}},\quad\hbox{ for all}\,\,\varepsilon\leq\varepsilon_0.
 $$
 \\
%

\noindent \underline{Case III:} Let us recall here $f\in L^1(\R^N)$ and $g$ satisfies
\begin{equation}\label{DF3}
 g(x)\geq\frac{\Pi_1(|x|)}{(1+|x|)^{\kappa}},
\end{equation}
where $\Pi_1\equiv C$, if $0<\kappa<\frac{1}{p-1}$, and $\Pi_1$ is positive, monotonously increasing, $\d{\lim_{r\rightarrow\infty}\Pi_1(r)=\infty}$ if $\kappa=\frac1{p-1}$. 
In this case, the inequality \eqref{Estimate1} implies

\begin{align}\label{1Estimate1}
 \e \int_{\R^N}g(x)\phi_T^\ell( x)dx
\le
C{T}^{N+1-{p^\prime}}+\underbrace{\e \zeta_{T}(0) \int_{\R^N}
\  |f(x)| \big|\Delta\big( \phi_T^\ell(x)\big)\big|\, dx}_{J_4}.
\end{align}
Taking account of  the fact that $f\in L^1(\R)$, the inequality
$\| \Delta\big(\phi_T^\ell(x)\big)\|_{L^{\infty}}\leq CT^{-2}$ and
\begin{equation*}\label{10mars1}
\zeta_{T}(0)=\int\nolimits_0^{T}\eta^k_{T}(\tau)\,\mathrm{d}\tau\le T,
\end{equation*}
 we write
\begin{equation}\label{D5A}
J_4 \leq C\e T^{-1} \int_{\R^N}|f(x)| \, dx= C\|f\|_{L^1(\R)}\e T^{-1}.
\end{equation}
By exploiting  \eqref{102A}, \eqref{103A} and \eqref{DF3},  we conclude that  we have 
\begin{equation}\label{16041}
 \e \int_{\R^N}g(x)\phi_T^\ell( x)dx\ge 
 \e \int_{2T\leq|x|\leq 3T}\frac{\Pi_1(|x|)}{(1+|x|)^{\kappa}} dx\geq C\Pi_1(2T)\e\,T^{N-\kappa}, \qquad \hbox{for all}\,\,T\ge 1.
 \end{equation}
  By combining \eqref{1Estimate1}, \eqref{D5A} and \eqref{16041}, we obtain
\begin{align}\label{Estimate22b}
 C_0\e \Pi_1(2T)T^{N-\kappa } \le
C_1{T}^{N+1-{p^\prime}}+C_1\|f\|_{L^1(\R)}\e T^{-1},\qquad \hbox{for all}\,\,T\ge 1,
\end{align}
 which leads, using    $\kappa\le \min (N+1,\frac1{p-1})$, to a contradiction by letting $T\rightarrow\infty$.\\
 
  Moreover,
  when     $\kappa< \min (N+1,\frac1{p-1})$, 
 \eqref{Estimate22b} yields
\begin{align}\label{Estimate22}
 C_0\e T^{N-\kappa} \le
C_1{T}^{N+1-{p^\prime}}+C_1\|f\|_{L^1(\R)}\e T^{-1},\qquad \hbox{for all}\,\, T\ge 1.
\end{align}
Therefore,

$\bullet$ If $T\geq \left(\frac{2C_1\|f\|_1}{C_0}\right)^{1/(N+1-\kappa)}:=T_0$, then 
$$C_1\|f\|_1\varepsilon\,T^{-1}\leq  \frac{C_0\e}{2}\,T^{N-\kappa} \qquad \hbox{for all}\,\, T\ge \max (T_0,1).$$
Hence, the inequality \eqref{Estimate22} becomes
 \begin{align}\label{Estimate222}
 C_0\e \le
2C_1{T}^{\kappa+1-{p^\prime}}, \qquad \hbox{for all}\,\,T\ge \max (T_0,1).
\end{align}
   which leads,
$T_{w}$ satisfies
$$
 T_{w}\le
 C\e^{-\frac{p-1}
 {1-\kappa(p-1)}},\quad\hbox{ for all}\,\,\varepsilon>0.
 $$
\noindent$\bullet$ If $T\leq \left(\frac{2C_1\|f\|_1}{C_0}\right)^{1/(N+1-\kappa)}$, we may directly conclude that
 $$T\leq  \left(\frac{2C_1\|f\|_1}{C_0}\right)^{1/(N+1-\kappa)}\leq C\e^{-\frac{p-1}
 {1-\kappa(p-1)}},\quad\hbox{ for all}\,\,\varepsilon\leq\varepsilon_0,$$
where
$$\varepsilon_0:=C^{\frac{1-\kappa(p-1)}{p-1}}\left(\frac{C_0}{2C_1\|f\|_1}\right)^{\frac{1-\kappa(p-1)}{(p-1)(N+1-\kappa)}}.$$
This achieves the proof of Theorem \ref{th2}.
 \end{proof}

\bibliographystyle{plain}

	
\end{document}